\documentclass[11pt]{article}
\usepackage{amsmath,amssymb,amsthm}
\usepackage{times}
\usepackage[utf8]{inputenc}
\usepackage{indentfirst}
\setlength{\parindent}{1em}
\usepackage{graphicx,tikz}
\pagenumbering{gobble}
\usepackage[T1]{fontenc}
\usepackage{comment}

\newtheorem{problem}{Problem}

\newtheorem{definition}[problem]{Definition}
\newtheorem{theorem}[problem]{Theorem}
\newtheorem{proposition}[problem]{Proposition}

\title{The Width of Hamming Balls}
\author{Kada K\'{a}lm\'{a}n Williams}
\date{October 30, 2024}

\begin{document}

\maketitle

\begin{abstract}
    The width of a poset is the size of its largest antichain. Sperner's theorem states that $(2^{[n]},\subset)$ is a poset whose width equals the size of its largest layer. We show that Hamming ball posets also have this property. This extends earlier work that proves this in the case of small radii. Our proof is inspired by (and corrects) a result of Harper.
\end{abstract}

\section{Introduction}

Let $P$ be an arbitrary set and $<$ be an irreflexive, transitive relation on pairs of its elements. Then we say that $(P,<)$ is a partially ordered set, or to contract, a {\it poset}.

In a poset $P$, with relation $<$, an element $x$ is {\it covered} by $y$ if $x<y$ without any intermediate $z\in P$, such that $x<z$ and $z<y$. For example, in the poset of rational numbers, $0$ is not covered by any element, by the Archimedean axiom. However, in a finite poset, an intermediate element can only be found finitely many times, and so every relation $x<y$ can be deduced from a {\it chain of covers} $x<z_1$, $z_1<z_2$, $\dots$, $z_{k-1}<z_k$, $z_k<y$.

Let $(P,<)$ be a finite poset. Consider the function $r:P\to \{0,1,\dots\}$ given by
    $$r(x')=\begin{cases} 0\quad \text{if } \{x\in P:x<x'\}=\emptyset, \\
    \max\{r(x):x<x'\}+1 \quad \text{otherwise}.\end{cases}$$
The value $r(x)=k$ is the {\it rank} of $x$. The set of $x\in P$ whose rank is $k$ is {\it layer} $P_k$. Clearly, if $r(y)=k$ and $x<y$, then $r(x)<r(y)$.

Let $(P,<)$ be a poset with $C\subseteq P$ and $A\subseteq P$. If any two elements $x\neq y$ in $C$ are comparable, we say that $C$ is a chain. If any two elements $x\neq y$ in $A$ are incomparable, we say that $A$ is an antichain. Observe that in this case, $|A\cap C|\le 1$.

\begin{definition}
    Let $(P,<)$ be a finite poset. The size of the largest antichain of $P$ is the width of $P$, denoted $w(P)$. The size of the largest layer of $P$ is denoted $\ell(P)$.
\end{definition}

Since a layer of a finite poset $P$ is an antichain, $w(P)\ge \ell(P)$.

\begin{theorem}[Sperner, 1928]
    In the poset $P=(2^{[n]},\subset)$, $w(P)=\ell(P)$.
\end{theorem}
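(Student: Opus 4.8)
The plan is to invoke the remark already recorded in the excerpt, namely that $w(P)\ge \ell(P)$, and then concentrate all effort on the reverse inequality $w(P)\le \ell(P)$. The essential leverage is the observation, also noted above, that for any antichain $A$ and any chain $C$ one has $|A\cap C|\le 1$. Hence if $2^{[n]}$ can be partitioned into $m$ chains $C_1,\dots,C_m$, then every antichain $A$ satisfies $|A|=\sum_{i}|A\cap C_i|\le m$. The largest layer of $P$ is the middle one, of size $\ell(P)=\binom{n}{\lfloor n/2\rfloor}$, so it suffices to exhibit a partition of $2^{[n]}$ into exactly $\binom{n}{\lfloor n/2\rfloor}$ chains.

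First I would build such a partition as a \emph{symmetric chain decomposition}: a partition into chains $S_a\subset S_{a+1}\subset\dots\subset S_{n-a}$ whose ranks run symmetrically from $a$ to $n-a$ about $n/2$, each step being a single cover. I would proceed by induction on $n$, the cases $n\le 1$ being immediate. Given a symmetric chain decomposition of $2^{[n-1]}$, I would have each chain $A_0\subset\dots\subset A_j$ spawn two chains of $2^{[n]}$: the chain $A_0\subset\dots\subset A_j\subset(A_j\cup\{n\})$, obtained by appending the new element $n$ at the top, and the chain $(A_0\cup\{n\})\subset\dots\subset(A_{j-1}\cup\{n\})$, obtained by adjoining $n$ to every set except the largest (this second chain being empty when $j=0$).

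The main obstacle is the verification that this genuinely yields a symmetric chain decomposition of $2^{[n]}$. I must confirm that each new chain is still symmetric about $n/2$ (a rank-arithmetic check on the shifts), that across all the original chains the new chains are pairwise disjoint, and that together they cover $2^{[n]}$. Disjointness and covering can be settled by tracking, for each $T\subseteq[n]$, both whether $n\in T$ and which chain of $2^{[n-1]}$ contains $T\setminus\{n\}$, so that every $T$ lands in exactly one new chain. Once the decomposition is in hand, every symmetric chain meets the middle layer (rank $\lfloor n/2\rfloor$) exactly once, so the number of chains equals $\binom{n}{\lfloor n/2\rfloor}=\ell(P)$, which forces $w(P)\le\ell(P)$ and hence equality.

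As a shorter alternative that avoids the construction entirely, I would keep the LYM inequality in reserve: counting the $n!$ maximal chains from $\emptyset$ to $[n]$, noting that each $S\in A$ lies on exactly $|S|!\,(n-|S|)!$ of them while no maximal chain passes through two elements of the antichain $A$, gives $\sum_{S\in A}\binom{n}{|S|}^{-1}\le 1$. Since $\binom{n}{|S|}\le\binom{n}{\lfloor n/2\rfloor}$ for every $S$, each summand is at least $\binom{n}{\lfloor n/2\rfloor}^{-1}$, whence $|A|\cdot\binom{n}{\lfloor n/2\rfloor}^{-1}\le 1$, i.e. $|A|\le\ell(P)$. I expect to favour the symmetric chain decomposition nonetheless, as it is the natural template for the Hamming-ball generalisation to follow.
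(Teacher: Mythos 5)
Your proposal is correct, but your primary argument takes a genuinely different route from the paper. The paper never partitions $2^{[n]}$ into chains explicitly: it deduces Sperner's theorem from the LYM property (Theorem \ref{strspe}), which it establishes by exhibiting a unit flow --- the orbit of the chain $\emptyset\subset[1]\subset\dots\subset[n]$ under the symmetric group --- and then invoking the general observation that a LYM poset is Sperner. That is exactly your ``shorter alternative'': counting the $n!$ maximal chains and noting that $S$ lies on $|S|!\,(n-|S|)!$ of them is the same computation as sampling a chain from that orbit uniformly. Your main argument, by contrast, is the symmetric chain decomposition of de Bruijn--Tengbergen--Kruyswijk type, built by the standard induction (append $n$ at the top of each chain; adjoin $n$ to all but the top element to form a second chain), and your sketch of symmetry, disjointness, and coverage is sound; since every symmetric chain meets the middle layer, the number of chains is $\binom{n}{\lfloor n/2\rfloor}=\ell(P)$ and Dilworth-style counting finishes the proof. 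What each approach buys: the chain decomposition is closer to Sperner's original argument (which the paper cites but does not reproduce) and gives the stronger structural fact that $2^{[n]}$ is a symmetric chain order; the LYM/unit-flow route is the one that actually scales to the paper's later results, since Proposition \ref{flow} and the block-poset arguments for products and Hamming balls are all phrased in terms of unit flows rather than chain partitions. So your stated preference for the decomposition as ``the natural template for the Hamming-ball generalisation'' is the one point where you diverge from the paper's strategy: the generalisation there runs through LYM, not through symmetric chains.
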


Sperner's proof \cite{Spe} observes that if $P$ is the union of $c$ chains, then $w(P)\le c$ -- a simpler proof of this is given by Greene and Kleitman \cite{GrK}. However, a more general approach to poset width calculation considers a collection of chains that covers $x\in P$ with a certain multiplicity. \cite{Lub}

\begin{definition}
    Let $\mathcal{C}$ be a collection of chains in a finite poset $P$. Suppose that if we sample $C\in \mathcal{C}$ uniformly at random, then for all $x\in P$, the chance that $x\in C$ is $\frac1{|P_{r(x)}|}$. Then we say that $\mathcal{C}$ is a unit flow.
\end{definition}

Applying our observation that an antichain $A$ intersects a chain $C$ in at most a singleton to a unit flow $C$, by linearity of expectation, we have the following property.

\begin{definition}
    Let $P$ be a finite poset. If every antichain $A\subset P$ satisfies
    $$\sum_{x\in A} \frac1{|P_{r(x)}|}\le 1,$$
    then we say that $P$ is a LYM poset \cite{DFr} or that $P$ is a strong Sperner poset \cite{HKL}.
\end{definition}

If $P_k$ is the largest layer of $P$, then for all $x\in P$, $|P_{r(x)}|\le |P_k|$. We deduce that if $P$ is a LYM poset, then $P$ is a Sperner poset.

\begin{theorem}[Yamamoto-Meshalkin-Bollob\'{a}s-Lubell] \label{strspe}
    $(2^{[n]},\subset)$ is a LYM poset.
\end{theorem}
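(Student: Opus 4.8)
The plan is to exhibit a unit flow on $(2^{[n]},\subset)$ and then invoke the observation recorded just before the definition of a LYM poset: if $\mathcal{C}$ is a unit flow and $A$ is any antichain, then sampling $C\in\mathcal{C}$ uniformly gives $|A\cap C|\in\{0,1\}$, so by linearity of expectation $\sum_{x\in A}1/|P_{r(x)}| = \mathbb{E}|A\cap C| \le 1$. Thus the entire burden of the theorem reduces to producing a single unit flow on the Boolean lattice.

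First I would record that in $(2^{[n]},\subset)$ the rank of a set $S$ equals its cardinality, so layer $P_k$ consists of the $k$-element subsets of $[n]$ and $|P_k|=\binom{n}{k}$. The candidate flow $\mathcal{C}$ is the family of all maximal chains $\emptyset=S_0\subset S_1\subset\cdots\subset S_n=[n]$ with $|S_i|=i$. Each such chain inserts the elements of $[n]$ one at a time, so it is encoded by the permutation listing the order of insertion; this gives a bijection between $\mathcal{C}$ and the $n!$ permutations of $[n]$. Note in particular that every chain in $\mathcal{C}$ contains both $\emptyset$ and $[n]$, which is forced since $|P_0|=|P_n|=1$.

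The key step is the uniformity computation. Fix a set $S$ with $|S|=k$ and count the maximal chains through $S$: such a chain must fill up $S$ during its first $k$ steps and then exhaust $[n]\setminus S$, so the number of admissible permutations is $k!\,(n-k)!$, independently of which particular $k$-set $S$ is — this independence is exactly the transitivity of the $S_n$-action on $k$-subsets. Sampling $C\in\mathcal{C}$ uniformly therefore gives
$$\Pr[S\in C]=\frac{k!\,(n-k)!}{n!}=\frac{1}{\binom{n}{k}}=\frac{1}{|P_{r(S)}|},$$
which is precisely the defining condition of a unit flow.

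With $\mathcal{C}$ confirmed to be a unit flow, the theorem follows at once from the stated observation. The only point demanding care is the count of chains through a fixed $S$: one must verify that it is the same $k!\,(n-k)!$ for \emph{every} $k$-set, and that this yields exactly $1/|P_k|$ rather than merely an average of the right size — but both facts are transparent from the permutation encoding, so I do not anticipate a genuine obstacle. If one preferred to bypass the unit-flow language entirely, the same count is Lubell's double argument: summing $k!\,(n-k)!$ over $S\in A$ counts distinct maximal chains (distinct because $A$ is an antichain), hence is at most $n!$, and dividing by $n!$ delivers the LYM inequality directly.
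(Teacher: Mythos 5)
Your proof is correct and is essentially the paper's own argument spelled out in full: the paper's one-line proof takes the orbit of the chain $\emptyset\subset[1]\subset\cdots\subset[n]$ under the symmetries of $[n]$, which is exactly your family of all $n!$ maximal chains, and your $k!\,(n-k)!$ count is the explicit form of the symmetry argument. No discrepancy to report.
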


\begin{proof}
    It suffices to find a unit flow, letting the symmetries of $[n]$ act on the chain $\emptyset\subset [1]\subset [2]\subset \dots\subset [n]$.
\end{proof}

Finding a unit flow, in general, is difficult. Hence, it is of benefit for us to know a sufficient condition -- a consequence of the Max Flow Min Cut theorem. \cite{DHW}

\begin{proposition}[Kleitman, 1974] \label{flow}
    Let $(P,<)$ be a finite poset, where the LYM inequality $\sum_{x\in A} \frac1{|P_{r(x)}|}\le 1$ holds for all antichains $A$ contained within two consecutive layers. For such a poset $P$, there is a unit flow. 
\end{proposition}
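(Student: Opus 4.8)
The plan is to construct the unit flow one layer-boundary at a time and then glue the pieces into complete chains, with Max Flow Min Cut supplying each piece. For consecutive layers $P_k$ and $P_{k+1}$, let $G_k$ be the bipartite graph whose edges are the cover relations $x\lessdot y$ (meaning $x\in P_k$ is covered by $y\in P_{k+1}$), and for $S\subseteq P_k$ let $\Gamma(S)\subseteq P_{k+1}$ be the set of elements covering some member of $S$. The first step is to translate the hypothesis into a combinatorial statement about $G_k$. Note that any comparability $x<y$ with $r(x)=k$ and $r(y)=k+1$ is automatically a cover, since an intermediate $z$ would need $k<r(z)<k+1$. Hence $S\cup(P_{k+1}\setminus\Gamma(S))$ is an antichain, and the LYM inequality applied to it rearranges to the \emph{normalized matching} inequality
$$\frac{|\Gamma(S)|}{|P_{k+1}|}\ge \frac{|S|}{|P_k|}\qquad\text{for every }S\subseteq P_k.$$

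Next I would set up a network $N_k$: a source $s$ joined to each $x\in P_k$ by an edge of capacity $1/|P_k|$, each cover edge $x\lessdot y$ given infinite capacity, and each $y\in P_{k+1}$ joined to a sink $t$ by an edge of capacity $1/|P_{k+1}|$. The total source capacity is $1$, and the claim is that the maximum flow equals $1$, saturating every source and sink edge. By Max Flow Min Cut it suffices to bound every cut below by $1$. A finite cut cannot sever an infinite cover edge, so it is determined by the set $S\subseteq P_k$ whose source edges it leaves intact together with a set $T\subseteq P_{k+1}$ of severed sink edges; blocking every path $s\to x\to y\to t$ forces $T\supseteq\Gamma(S)$. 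Its capacity is therefore at least
$$\frac{|P_k|-|S|}{|P_k|}+\frac{|\Gamma(S)|}{|P_{k+1}|}\ge \frac{|P_k|-|S|}{|P_k|}+\frac{|S|}{|P_k|}=1,$$
the middle inequality being exactly the normalized matching property just derived. Thus a flow $f_k$ of value $1$ exists, pushing $1/|P_k|$ out of each $x\in P_k$ along covers and delivering $1/|P_{k+1}|$ into each $y\in P_{k+1}$.

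Finally I would combine the $f_k$ into one flow on the layered cover graph from $P_0$ up to the top layer $P_m$. The mechanism is conservation: each internal $y\in P_{k+1}$ receives total flow $1/|P_{k+1}|$ under $f_k$ and emits the same total under $f_{k+1}$, so the level flows glue into a single flow of value $1$ from a super-source feeding $P_0$ to a super-sink draining $P_m$. Ordinary flow decomposition writes this as a nonnegative combination of source-to-sink paths; each path is a string of covers $x_0\lessdot x_1\lessdot\dots\lessdot x_m$, hence a maximal chain, and the total weight on chains through a fixed $x$ equals the flow through $x$, namely $1/|P_{r(x)}|$. Since all capacities are rational, Max Flow Min Cut yields a rational optimal flow, so clearing denominators realizes the weighted family as an honest collection $\mathcal{C}$ of chains under uniform sampling with $\Pr[x\in C]=1/|P_{r(x)}|$, which is the desired unit flow.

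The heart of the argument, and the step I expect to require the most care, is identifying the network so that its minimum cut corresponds \emph{precisely} to the LYM inequality. A companion subtlety underlies the gluing in the last step: for every source and sink edge to be saturable, each non-top element must be covered by something one rank up and each non-bottom element must cover something one rank down. This is not assumed, but it follows from the hypothesis, since a maximal element of rank $k<m$ would make $\{x\}\cup P_{k+1}$ an antichain violating LYM; so the hypothesis itself furnishes the graded structure that makes the level flows conservative and lets them compose into full chains.
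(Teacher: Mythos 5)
Your proof is correct: the translation of the two-layer LYM hypothesis into the normalized matching inequality $|\Gamma(S)|/|P_{k+1}|\ge |S|/|P_k|$, the min-cut bound of $1$ on each bipartite network, the conservation-based gluing across layers, and the rational flow decomposition into weighted maximal chains are all sound, as is your closing observation that the hypothesis itself rules out maximal elements below the top rank (the downward covering you also invoke is automatic from the definition of rank). The paper offers no proof of this proposition at all --- it simply cites it as a consequence of the Max Flow Min Cut theorem via \cite{DHW} --- and your argument is precisely that standard approach, carried out in full.
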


We shall use this condition to prove that product posets and Hamming balls have the LYM property.

\newpage

\section{Harper's theorem on product posets}

\begin{definition}
    Let $(P,<)$ be a finite poset, and let $P_k$ have size $p_k$. If the fractions $\frac{p_{k+1}}{p_k}$, when defined, are decreasing in $k$, we say that $P$ is log-concave.
\end{definition}

For instance, if $P$ is log-concave, then the subset of $P$ given by layers indexed by an arithmetic progression is also log-concave. \cite{WaY}

\begin{definition}
    Let $P$ and $Q$ be posets. Then the set $P\times Q$ with partial order $(x_P,x_Q)\le (y_P,y_Q)$ if $x_P\le y_P$ and $x_Q\le y_Q$ is called the product poset $P\times Q$.
\end{definition}

For example, $(2^{[n]},\subseteq)$ can be recognized as the $n$-fold product of $(\{\emptyset,\{\emptyset\}\},\subseteq)$.

\begin{theorem}[Harper, 1974] \label{harper}
    Let $P$ and $Q$ be finite posets. If both $P$ and $Q$ are log-concave LYM posets, then their product poset $P\times Q$ is LYM and log-concave.
\end{theorem}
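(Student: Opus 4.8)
The plan is to prove the two conclusions separately, handling log-concavity of $P\times Q$ as a statement about convolutions and the LYM property through Kleitman's local criterion, Proposition \ref{flow}. Write $p_i=|P_i|$ and $q_j=|Q_j|$. Since the rank of $(x,y)\in P\times Q$ is $r(x)+r(y)$, the layer sizes are the convolution $N_k:=|(P\times Q)_k|=\sum_{i+j=k}p_iq_j$. The log-concavity of $P\times Q$ is then the classical fact that the convolution of two log-concave sequences with no internal zeros is again log-concave. I would record this as a lemma; the one-line inputs are that a log-concave sequence has interval support (so $N_k$ has interval support, matching the ``when defined'' clause) and a standard term-by-term comparison of $N_k^2$ with $N_{k-1}N_{k+1}$. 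This part is routine, so the real content is the LYM assertion.

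For the LYM property, by Proposition \ref{flow} it suffices to establish, for each $k$, the normalized matching property between the consecutive layers $(P\times Q)_k$ and $(P\times Q)_{k+1}$: a nonnegative weighting of the cover edges under which every element of layer $k$ carries total outgoing weight $1/N_k$ and every element of layer $k+1$ carries total incoming weight $1/N_{k+1}$. Such a fractional matching forces the LYM inequality on every antichain lying in two consecutive layers (if $A=A_k\cup A_{k+1}$ is an antichain then $A_{k+1}$ avoids the upper shadow of $A_k$, and the degree conditions bound $|A_{k+1}|/N_{k+1}\le 1-|A_k|/N_k$), which is exactly the hypothesis of Proposition \ref{flow}. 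Since $P$ and $Q$ are LYM, I fix unit flows on them and let $f_P^{(i)},f_Q^{(j)}$ be the induced edge weights between consecutive layers, so that $\sum_{y}f_P^{(i)}(x,y)=1/p_i$ for every $x\in P_i$ and $\sum_{x}f_P^{(i)}(x,y)=1/p_{i+1}$ for every $y\in P_{i+1}$, and similarly for $Q$. I then look for a flow on $P\times Q$ of product form: weight $f_P^{(i)}(x,y)\,a_{i,j}$ on a $P$-cover inside the cell of ranks $(i,j)$, and $f_Q^{(j)}(x,y)\,b_{i,j}$ on a $Q$-cover, for scalars $a_{i,j},b_{i,j}\ge0$ to be determined.

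The virtue of this ansatz is that the constants $1/p_i$ and $1/q_j$ supplied by $f_P,f_Q$ make the degree condition hold uniformly over a whole cell, collapsing the per-element requirement to a per-cell transportation problem on the grid of ranks. Writing $A_{i,j}=q_ja_{i,j}$ and $B_{i,j}=p_ib_{i,j}$ for the total weight leaving cell $(i,j)$ in the $P$- and $Q$-directions, I need $A_{i,j},B_{i,j}\ge0$ satisfying $A_{i,j}+B_{i,j}=p_iq_j/N_k$ for every cell on layer $k$ and $A_{i-1,j}+B_{i,j-1}=p_iq_j/N_{k+1}$ for every cell on layer $k+1$. A nonnegative solution of this grid problem yields, via the product ansatz, the required fractional matching on $P\times Q$.

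Finally I would solve the grid problem and this is the main obstacle, the only place log-concavity is used. Parametrizing each diagonal by $i$ (so $j=k-i$) and summing the two families of relations telescopically forces the flow uniquely: setting $s_i=p_iq_{k-i}/N_k$ and $d_i=p_iq_{k+1-i}/N_{k+1}$, one gets $A_t=\sum_{i\le t}(s_i-d_i)$, with $B_t=s_t-A_t$. Everything reduces to proving $A_t\ge0$ and $B_t\ge0$. I claim each increment $s_i-d_i$ changes sign exactly once, from nonnegative to nonpositive, as $i$ grows: its sign is that of $q_{k-i}/q_{k+1-i}-N_k/N_{k+1}$, and $q_{k-i}/q_{k+1-i}$ is monotone in $i$ precisely because $Q$ is log-concave. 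Hence $A_t$ is unimodal; since it vanishes at the top of the diagonal by total balance and is nonnegative at the bottom endpoint, unimodality yields $A_t\ge0$ throughout. The symmetric computation, using log-concavity of $P$ through the monotonicity of $p_i/p_{i-1}$, gives $B_t\ge0$; a short case analysis at the two ends of each diagonal, where the grid support meets the $P$- or $Q$-boundary and one of $A,B$ is forced to equal $0$, $s_i$, or $d_i$, confirms endpoint nonnegativity. This nonnegativity is the crux and presumably the point where Harper's argument needs correcting: without log-concavity the uniquely determined grid flow can become negative, consistent with the fact that a product of LYM posets need not be LYM.
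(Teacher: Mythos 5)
Your proposal is correct, and it shares the paper's overall architecture --- reduce to the grid of ``cells'' $P_i\times Q_j$ by using unit flows on the factors to spread weight uniformly within each cell, then invoke Proposition \ref{flow} --- but it diverges at the crucial step, and in an interesting way. The paper verifies the \emph{cut} condition for the block poset: the LYM inequality for an interval of $t$ consecutive blocks on a layer against the $t+1$ blocks of its upper shadow, established by expanding the cross-multiplied inequality and matching terms via log-concavity (this expansion is exactly where Harper's original term-matching fails and where the paper supplies the correction). You instead construct the \emph{flow} directly: since the bipartite graph between consecutive diagonals of cells is a path, the cell-level fractional matching is uniquely determined by telescoping, and the whole problem collapses to nonnegativity of the partial sums $A_t=\sum_{i\le t}(s_i-d_i)$ and of $B_t=d_t-A_{t-1}$. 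Your single-sign-change argument for the increments (the sign of $s_i-d_i$ is governed by the monotone ratio $q_{k-i}/q_{k+1-i}$ against the constant $N_k/N_{k+1}$, and symmetrically for $B$ via $p_i/p_{i+1}$) is sound and, combined with the forced nonnegative value at the top end of each diagonal, does yield nonnegativity everywhere; one small imprecision is that $A_t$ need not \emph{vanish} at the top of the diagonal (when the path terminates at an upper cell the last value is $d_{b+1}>0$), but it is nonnegative there in every case, which is all the unimodality argument needs, and in fact nonnegativity at the bottom endpoint then follows automatically rather than requiring a separate check. The flow route buys transparency --- uniqueness of the path flow makes it clear exactly what must be proved and avoids the double-counting trap --- while the paper's cut route has the advantage of producing the explicit interval inequality that it then reuses verbatim in the proof of Theorem \ref{ham}. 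Your treatment of log-concavity of $P\times Q$ as the classical convolution fact coincides with the paper's.
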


\begin{proof}
    For completeness, we give a proof, while correcting an error made in \cite{Har}.

    Let $P$ and $Q$ consist of layers $P_0,\dots,P_h$ and $Q_0,\dots,Q_\mu$, respectively. By Proposition \ref{flow}, there is a unit flow on $P$ and on $Q$, and particularly from $P_k$ to $P_{k+1}$ and from $Q_l$ to $Q_{l+1}$ ($0\le k<h$, $0\le l<\mu$).

    Every element of $P_k\times Q_l\subseteq P\times Q$ has rank $k+l$, and so 
    $$(P\times Q)_m=\bigcup_{k+l=m} P_k\times Q_l.$$
    Consider a "block poset", whose elements are the "blocks" $P_k\times Q_l$ ($0\le k\le h$, $0\le l\le \mu$) and relations are $P_k\times Q_l\le  P_{k'}\times Q_{l'}$ for $k\le k'$ and $l\le l'$. Each block $P_k\times Q_l$ occurs with multiplicity given by its numbers of elements. In this block poset, by Proposition \ref{flow}, a unit flow can be found if the LYM inequality holds within consecutive layers. This would follow from the inequality
    \footnote{It is this inequality, for a unit flow in the block poset, which \cite{Har} proves incorrectly. Estimating
    $$p_{k-i}p_{k+1-j}q_{l+i}q_{l+j}\le\begin{cases} p_{k+1-i}p_{k-j}q_{l+i}q_{l+j} \text{ if }j\le i, \\ p_{k+1-(i+1)}p_{k-(j-1)}q_{l+(i+1)}q_{l+(j-1)}\text{ if }j>i. \end{cases}$$
    and summing, \cite{Har} notes that "every term of the previous sum gives a unique term of this one", falsely, because the contributions of upper bounds for $j=i-1$ and for $i=j-1$ involve $t-1$ duplicates.}
    $$\frac{\sum_{i=0}^{t-1} |P_{k-i}\times Q_{l+i}|}{|(P\times Q)_{k+l}|}\le \frac{\sum_{i=0}^t |P_{k+1-i}\times Q_{l+i}|}{|(P\times Q)_{k+l+1}|},$$
    where $t$ is a positive integer, $0\le k+l<h+\mu$, and undefined $P_k$ or $Q_l$ are empty.
     
    Given a unit flow on the block poset, a unit flow on $P\times Q$ emerges by replacing $P_k\times Q_l<P_{k+1}\times Q_l$ in the random chain with elements of the respective blocks in proportion to the unit flow from $P_k$ to $P_{k+1}$, and similarly for $P_k\times Q_l<P_k\times Q_{l+1}$.
    
    To establish the requisite bounds, denote $|P_k|=p_k$ and $|Q_l|=q_l$. We require
    $$\left(\sum_{i=0}^{t-1} p_{k-i}q_{l+i}\right)\left(\sum_j p_{k+1-j}q_{l+j}\right)\le \left(\sum_{i=0}^t p_{k+1-i}q_{l+i}\right)\left(\sum_j p_{k-j}q_{l+j}\right),$$
    $$\sum_{i=0}^{t-1}\sum_j p_{k-i}p_{k+1-j}q_{l+i}q_{l+j}\le \sum_{i=0}^t\sum_j p_{k+1-i}p_{k-j}q_{l+i}q_{l+j}.$$
    Observe the dependence on $q_{l+j}$ ($j<0$). Its coefficients on either side are 
    $$\sum_{i=0}^{t-1}p_{k-i}p_{k+1-j}q_{l+i}\quad \text{and}\quad \sum_{i=0}^t p_{k+1-i}p_{k-j}q_{l+i}.$$
    By log-concavity of $P$, $p_{k-i}p_{k+1-j}\le p_{k+1-i}p_{k-j}$ holds if $i\ge 0\ge j$. Hence, if only for the terms involving $q_{l+j}$ ($j<0$), our inequality holds. The same is true for the terms involving $p_{k-j}$ ($j\ge t$). Its coefficients, via summand $(j+1)$, are 
    $$\sum_{i=0}^{t-1}p_{k-i}q_{l+i}q_{l+j+1}=\sum_{i=1}^{t}p_{k+1-i}q_{l+i-1}q_{l+j+1}\quad \text{and}\quad \sum_{i=0}^t p_{k+1-i}q_{l+i}q_{l+j}.$$
    By log-concavity of $Q$, $q_{l+i-1}q_{l+j+1}\le q_{l+i}q_{l+j}$ holds if $i\le t\le j$. Furthermore, there is no term that involves both $q_{l+j}$ ($j<0$) and $p_{k-j}$ ($j\ge t$). Finally, for terms that involve neither, one observes an identity
    $$\left(\sum_{i=0}^{t-1} p_{k-i}q_{l+i}\right)\left(\sum_{j=0}^{t} p_{k+1-j}q_{l+j}\right)= \left(\sum_{i=0}^t p_{k+1-i}q_{l+i}\right)\left(\sum_{j=0}^{t-1} p_{k-j}q_{l+j}\right).$$
    Thus, the requisite bounds are valid. Hence, $P\times Q$ is LYM, having a unit flow.

    In \cite{Har}, it was asserted that $P\times Q$ is log-concave, without proof. Following \cite{Can}, the log-concave condition can be rewritten as
    $$|(P\times Q)_k|^2\ge |(P\times Q)_{k+1}|\cdot |(P\times Q)_{k-1}|,$$
    $$\left(\sum_i p_{k-i}q_i\right)\cdot \left(\sum_j p_{k-j}q_j\right)\ge \left(\sum_i p_{k+1-i}q_i\right)\cdot \left(\sum_j p_{k-1-j}q_j\right),$$
    $$\sum_{i,j} (p_{k-i}p_{k-j}-p_{k+1-i}p_{k-1-j})(q_iq_j-q_{i-1}q_{j+1})\ge 0.$$
    By log-convexity of $P$ and $Q$, these two brackets are both $\ge 0$ if $i\le j$ and both $\le 0$ if $i\ge j$. The log-concavity criterion follows.
\end{proof}

\newpage

\section{Hamming balls are LYM posets}

\begin{definition}
The hypercube graph is given by vertex set $2^{[n]}$, with edges between sets that only differ by a singleton. The family of subsets of graph distance at most $\rho$ from a given set of $p$ elements is called a Hamming ball, denoted $B_\rho[p,n-p]$.
\end{definition}

The following result confirms a conjecture of the author \cite{KKW}, who showed that $B_\rho[p,q]$ is a Sperner poset, provided that $p\ge \frac{1}{27}\rho^3+O(\rho^2)$.

\begin{theorem} \label{ham}
    The Hamming ball $B_\rho[p,q]$ is a LYM poset.
\end{theorem}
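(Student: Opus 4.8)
The plan is to recognise $B_\rho[p,q]$ as a product of two Boolean lattices with one corner of every layer deleted, and then to run the flow argument of Theorem~\ref{harper} on the surviving region. Fix the centre $S$ with $|S|=p$, and for a set $A$ in the ball write $a=|A\cap S|$ and $b=|A\setminus S|$, so that the distance to $S$ is $(p-a)+b$ and membership reads $a-b\ge p-\rho$. Every cover changes $|A|$ by one, so the ball is graded by cardinality, its layer of rank $s$ being $\{A:|A|=s\}$. The map $A\mapsto(A\cap S,\,A\setminus S)$ then identifies the ball, with this grading, with the product poset $2^{[p]}\times 2^{[q]}$ graded by $a+b$ and cut down to
\[
R=\{(X,Y):\ |X|-|Y|\ge p-\rho\}.
\]
Hence the ball carries exactly the layers of a product of two log-concave LYM posets, but with the small-$a$ end of each layer removed; the ball itself need not be log-concave, so Theorem~\ref{harper} will not be applied directly, only the log-concavity of the two factors.

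By Proposition~\ref{flow} it suffices to construct a unit flow, and I would do so exactly as in the proof of Theorem~\ref{harper}. Pass to the block poset whose points are the blocks $\{A:|A\cap S|=a,\,|A\setminus S|=b\}$ meeting $R$, each with multiplicity $\binom pa\binom qb$; a unit flow on the block poset refines to one on the ball by inserting, inside every block-cover, the unit flows on $2^{[p]}$ and on $2^{[q]}$. Applying Proposition~\ref{flow} to the block poset, the whole statement reduces to the LYM inequality for antichains contained in two consecutive block-layers, equivalently to the existence of a fractional matching on the covering edges between them with the correct marginals.

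Between the layers $a+b=s$ and $a+b=s+1$, the block $(a,s-a)$ is covered only by $(a+1,s-a)$ and $(a,s-a+1)$; indexing blocks by $a$, block $a$ of layer $s$ is joined to blocks $a$ and $a+1$ of layer $s+1$, an interval bigraph that $R$ pinches at both ends. Let $\mu_s(a)\propto\binom pa\binom q{s-a}$ and $\mu_{s+1}(a)\propto\binom pa\binom q{s+1-a}$ be the normalised block weights of the two layers. A marginal-respecting matching on this graph transports each unit of mass upward by $0$ or $1$ in the index $a$, and transportation on a path is feasible precisely when
\[
\mu_{s+1}(\le a)\le\mu_s(\le a)\quad\text{and}\quad\mu_{s+1}(\le a)\ge\mu_s(\le a-1)\qquad\text{for every }a.
\]

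Verifying these two bounds is the heart of the matter, and it is exactly the step mishandled in \cite{Har}. Both are monotone-likelihood-ratio statements. For the first, $\mu_{s+1}(a)/\mu_s(a)=\binom q{s+1-a}/\binom q{s-a}$ increases in $a$ by log-concavity of $2^{[q]}$, yielding the stochastic dominance $\mu_{s+1}\succeq\mu_s$, which is the left inequality. For the second, shift the source up by one: since $\binom p{a-1}\binom q{s-(a-1)}$ and $\binom pa\binom q{s+1-a}$ share the factor $\binom q{s+1-a}$, the relevant ratio is $\binom p{a-1}/\binom pa=a/(p-a+1)$, increasing in $a$ by log-concavity of $2^{[p]}$, so the shifted source dominates $\mu_{s+1}$ and the right inequality follows. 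Finally, $R$ confines both $\mu_s$ and $\mu_{s+1}$ to intervals of the index $a$ whose endpoints are nondecreasing in $s$ and advance by at most one as $s$ increases, and restricting stochastically ordered weights to such nested up-sets preserves the ordering, so both bounds survive the truncation. Casting the estimate as a likelihood-ratio comparison, rather than the term-by-term bookkeeping of \cite{Har}, is precisely what prevents the boundary contributions from being double counted, and making this truncation argument airtight is the main obstacle I anticipate.
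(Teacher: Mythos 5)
Your proposal follows essentially the same route as the paper's proof: the same block decomposition (your blocks $(a,b)$ are the paper's $B_{i,j}$ under $i=p-a$, $j=b$), the same refinement of a block-level unit flow to the ball via the symmetries of the two ground sets, and the same appeal to Proposition~\ref{flow} to reduce everything to two consecutive layers. Your transportation criterion --- the two cumulative conditions $\mu_{s+1}(\le a)\le\mu_s(\le a)$ and $\mu_{s+1}(\le a)\ge\mu_s(\le a-1)$ --- is equivalent, by Hall/Gale on this path-like bigraph, to the paper's capacity inequality for intervals of consecutive blocks (each of your conditions is the Hall condition for an interval reaching one end of the layer, and adding the two recovers Hall for an arbitrary interval), and your two likelihood-ratio computations use exactly the two binomial ratios $\frac{q-j}{j+1}$ and $\frac{i}{p-i+1}$ that drive the paper's window-extension argument. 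So the content of the key inequality is identical; only the bookkeeping differs.

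The one step that does not stand as written is the truncation sentence: ``restricting stochastically ordered weights to such nested up-sets preserves the ordering'' is false as a general principle. Take $\nu_1$ with mass $\tfrac12$ at $0$ and $\tfrac12$ at $2$, and $\nu_2$ with mass $\tfrac25$ at $1$ and $\tfrac35$ at $3$; then $\nu_2$ stochastically dominates $\nu_1$, but after restricting both to $\{1,2,3\}$ the dominance fails at $1$. What restriction does preserve is the likelihood-ratio order, not the stochastic order it implies. Fortunately that is all you need: both of your dominance statements were derived from monotone likelihood ratios, and since the ball truncates layer $s$ to an interval $[L_s,U_s]$ of the index $a$ with $L_s\le L_{s+1}\le L_s+1$ and $U_s\le U_{s+1}\le U_s+1$, the truncated ratios remain monotone in the extended sense (value $0$ only at the bottom of the common support, $\infty$ only at the top), so the likelihood-ratio comparison --- hence the dominance --- survives. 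Note that ``advances by at most one'' is genuinely needed and should be checked explicitly: if $L_{s+1}$ were $L_s+2$, your shifted source would carry mass strictly below the support of $\mu_{s+1}$ and the second condition would fail. So run the truncation at the level of likelihood ratios rather than CDFs and your proof closes; this is in effect what the paper does by declaring out-of-range blocks empty from the outset, so that only ratios of adjacent nonempty blocks are ever compared.
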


\begin{proof}
    Given $p,q,r$, let $B_{i,j}\subset B=B_\rho[p,q]$ contain the subsets of $[p+q]$ arising by removing $i$ elements from $\{1,\dots,p\}$ and then including $j$ elements from $\{p+1,\dots,p+q\}$. The covers of subsets in $B_{i,j}$ are then contained by $B_{i-1,j}$ or $B_{i,j+1}$.

    By Proposition \ref{flow}, to show this to be a LYM poset, it suffices to check the relevant inequality for consecutive layers. Moreover, as in our proof of Theorem \ref{harper}, this reduces to verifying a condition for the block poset given by the blocks $B_{i,j}$ and covers $B_{i,j}<B_{i-1,j}$, $B_{i,j}<B_{i,j+1}$. Indeed, a flow from $B_{i,j}$ to $B_{i-1,j}$ or $B_{i,j+1}$ arises by letting the symmetries of $\{1,\dots,p\}\sqcup \{p+1,\dots,p+q\}$ act on a two-element chain.

    Therefore, like Theorem \ref{harper} and Proposition \ref{flow}, we only have to establish that
    $$\text{capacity}(B_{i,j}\cup B_{i-1,j-1}\cup \dots\cup B_{i-(t-1),j-(t-1)})=\frac{\sum_{l=0}^{t-1} |B_{i-l,j-l}|}{\sum_l |B_{i-l,j-l}|}\le $$
    $$\le \text{capacity}(B_{i,j+1}\cup B_{i-1,j}\cup \dots\cup B_{i-t,j-(t-1)})=\frac{\sum_{l=0}^{t} |B_{i-l,j+1-l}|}{\sum_l |B_{i-l,j+1-l}|}.$$
    Here, we allow $B_{i,j}=\emptyset$ whenever $i+j>r$ or $i<0$ or $j<0$, and we presume these layers and $B_{i,j}$ to be nonempty. We can rearrange to
    $$\frac{\sum_l |B_{i-l,j+1-l}|}{\sum_l |B_{i-l,j-l}|}\ge \frac{\sum_{l=0}^{t} |B_{i-l,j+1-l}|}{\sum_{l=0}^{t-1} |B_{i-l,j-l}|}.$$    
    Call the first fraction $f$ and the second one $\phi$. Notice that $|B_{i,j}|=\binom{p}{i}\binom{q}{j}$, whence $$\frac{|B_{i,j+1}|}{|B_{i,j}|}=\frac{q-j}{j+1},\qquad \frac{|B_{i-1,j}|}{|B_{i,j}|}=\frac{i}{p-i+1}.$$
    These are decreasing in $j$ and increasing in $i$, respectively. It follows that if $B_{i+1,j+2}$ is nonempty, then $\frac{|B_{i-l,j+1-l}|}{|B_{i-l,j-l}|}\ge \frac{q-j}{j+1}$ whenever $l\ge 0$. This implies that $\phi\ge \frac{q-j}{j+1}$. However, $\frac{|B_{i+1,j+2}|}{|B_{i+1,j+1}|}<\frac{q-j}{j+1}$. Hence, if we include $|B_{i+1,j+2}|$ in the numerator and $|B_{i+1,j+1}|$ in the denominator, then the fraction $\phi$ decreases. Just so, if $B_{i-t-1,j-t}$ is nonempty, then $\phi\ge \frac{i-t}{p-(i-t)+1}\ge \frac{|B_{i-t-1,j-t}|}{|B_{i-t,j-t}|}$, and so incrementing $t$ decreases $\phi$. Therefore, for given $f$, the fraction $\phi$ is minimal when its numerator includes the complete range of nonzero subset sizes from the corresponding layer.

    But when the proportion of the covers equals $1$, the inequality is obvious.
\end{proof}

\begin{definition}
    Let $(P,<)$ be a poset and $Q\subseteq P$. Suppose that whenever $x<z<y$ and $x,y\in Q$, also $z\in Q$. Then we say that $Q$, with $<$ on its elements, is a convex subposet of $P$. 
\end{definition}

The smallest convex set containing $B_\rho[p,q]=\bigcup_{i+j\le \rho} B_{i,j}$ is the set $\bigcup_{i,j\le \rho} B_{i,j}$. This poset is also LYM, by the same proof as of Theorem \ref{ham}.

\newpage

\textbf{Acknowledgements.} The author would like to thank Imre Leader for helpful comments and suggestions. The author is financially supported by the Internal Graduate Studentship of Trinity College, Cambridge.

\textsc{Department of Pure Mathematics and Mathematical Statistics, University \
of Cambridge, Wilberforce Road, Cambridge CB3 0WB.} \\ \\
\textit{E-mail address:} \texttt{kkw25@cam.ac.uk}

\end{document}